\newcommand{\ud}{\mathrm d} 
\newcommand{\dbar}{\overline{\partial}}
\newcommand{\ddbar}{\partial\overline{\partial}}
\newcommand{\R}{\mathbb R}
\newcommand{\C}{\mathbb C}
\newcommand{\N}{\mathbb N}
\newcommand{\CP}{\mathbb P}
\newcommand{\psh}[1]{\mathrm{PSH}(#1)} 
\newcommand{\supp}{\mathrm{Supp~}}
\newcommand{\hessian}{\mathrm{H}}
\newcommand{\smooth}{C^{\infty}} 
\newcommand{\kahler}{K\"ahler~}
\newcommand{\MA}{Monge-Amp\`ere~}
\begin{document}

\theoremstyle{plain}
\newtheorem{theorem}{Theorem}[section]
\newtheorem{proposition}[theorem]{Proposition}
\newtheorem{lemma}[theorem]{Lemma}
\newtheorem{definition}[theorem]{Definition}
\newtheorem{fact}[theorem]{Fact}
\newtheorem{property}[theorem]{Property}
\newtheorem{corollary}[theorem]{Corollary}
\theoremstyle{definition}
\newtheorem*{remark}{Remark}
\newtheorem{example}[theorem]{Example}

\title{On the equivalence between local and global existence of complete \kahler metrics with plurisubharmonic potentials}
\author{Xu LIU}
\date{}

\maketitle

\begin{abstract}

\fontsize{10.5}{12}\selectfont

Like the classical potential theory, it was conjectured that there exists equivalence between locally and globally pluripolar and complete pluripolar sets, namely, Problem I of Lelong, and was solved by Josefson, Bedford - Taylor and Col\c{t}oiu. In this article, we consider complements of complete K\"ahler domains as the generalization of closed complete pluripolar sets and prove that there exists an equivalence between locally and globally existence of these sets.

\end{abstract}

\renewcommand{\thefootnote}{\fnsymbol{footnote}}
\footnotetext{\hspace*{-7mm} 
\begin{tabular}{@{}r@{}p{16.5cm}@{}}
& Date: \today. \\
& 2000 Mathematics Subject Classification. 32C25. 32U05.  \\
\end{tabular}}

\section{Introduction}

Holomorphic convexity / Steinness is an important topic in SCV and around it various theories have been developed. For example, E. E. Levi tried to find equivalent conditions of being domains of holomorphy via the description of their boundaries, i.e., Levi pseudoconvexity. This is the well-known Levi problem, which was solved in the affirmative first by K. Oka in 1940s. Following from that, many powerful and influential results were obtained in this direction. 

\bigskip

Another way to describe Stein manifolds is to use complete \kahler metrics, which was first considered by H. Grauert \cite{Grauert_1956}. 

A complete \kahler manifold $(M,\ud s^2)$ is a complex manifold $M$ together with a complete \kahler metric $\ud s^2$.
It is known that every Stein manifold carries a complete \kahler metric, i.e., it is a complete \kahler domain. A natural question is whether or not the converse holds, i.e., if $M$ is non-compact and complete K\"ahler, is $M$ necessarily Stein?  

It was observed by Grauert that the answer is negative. Instead, for any closed analytic subvariety $A$ of a Stein manifold $M$, he constructed a complete \kahler metric on $M \setminus A$ (Satz A in \cite{Grauert_1956}). 
In other words, in order to guarantee holomorphic convexity, besides existence of complete \kahler metrics, some additional assumptions are necessary , which generally divide into two kinds of approaches: curvature assumptions on the complete \kahler metrics; boundary regularity assumptions on the domains under consideration. 

In the same paper, Grauert showed that if $\Omega \subset M$ carries a complete \kahler metric and has a real-analytic boundary, then $\Omega$ is Stein (Satz C in \cite{Grauert_1956}). Later the regularity assumption was reduced from $C^{\omega}$ to $C^1$ by T. Ohsawa \cite{Ohsawa_1980_2}. 

\bigskip

In this article, we mainly follow the second line and study complete \kahler manifolds from the viewpoint of function theory. 
To be more precise, we study complete \kahler metrics by means of their potentials which are plurisubharmonic functions. In comparison with holomorphic functions, which are is some sense rigid, plurisubharmonic functions admit flexibility for modifications so that it is convenient to construct new functions as we desire.
At the same time, many notions in the classical potential theory can be generalized to several complex variables with subharmonic functions replaced by plurisubharmonic functions, e.g., pluripolar set, negligible sets, thin sets, etc. They are the important objects of study in the so-called pluripotential theory and play great roles in removable singularities and extension problems of analytic objects, etc.  

We will focus on these small sets. First it is not difficult to see the following propositions:  
\begin{enumerate}[(A)]
\item Except the trivial case (the whole domain), analytic sets are closed and complete pluripolar. 
\item Outside closed complete pluripolar sets one can construct complete \kahler \\
metrics. In other words, closed complete pluripolar sets are contained in the complements of complete \kahler domains. 
\end{enumerate}

Their relations can be shown in the following graph: 

\begin{eqnarray*}
&\{\text{analytic sets}\}& \\
&\cap& \\
&\{\text{closed complete pluripolar sets}\}& \subset~~~~~~~~ \{\text{pluripolar sets}\}\\
&\cap& ~~~~~~~~~~~~~~~~~~~~ \shortparallel \\
&\{\text{complements of complete K\"ahler domains}\}& ~~~~~~~~~~~ \{\text{negligible sets}\} 
\end{eqnarray*}

Now let us consider the converse of the inclusions in the left column. 

On complex-analyticity of real submanifolds as complements of complete \kahler domains, Ohsawa \cite{Ohsawa_1980_1} showed that for the two real codimensional case, merely $C^1$ regularity is sufficient. As a corollary, he also gave a partial answer to Nishino's problem \cite{Nishino_1962}, which can be seen as a partial converse to (A). It conjectured that if the graph of a continuous function is pluripolar, then the function is holomorphic. This problem was finally solved by N. Shcherbina \cite{Shcherbina_2005}. 

However, K. Diederich and J. E. Fornaess \cite{DF_1982_1} later considered the higher codimensional case  and showed that $C^{\omega}$-regularity is necessary. As counterexamples, they constructed a closed $\smooth$ submanifold $A$ of any real codimension $k \ge 3$ in a ball $B$, such that $A$ is not complex-analytic and $B \setminus A$ admits a complete \kahler metric. Later, we generalize their examples on open manifolds to the compact case. More precisely, for any $k \in \N, k \ge 3$, we construct a compact $\smooth$ submanifold $A$ of real codimension $k$ in $\CP^n$, such that $A$ is not complex-analytic and $\CP^n \setminus A$ admits a complete \kahler metric \cite{L_2015}. 

\bigskip

In light of the results in the classical potential theory, similar problems were posed for pluripotential theory, e.g., the equivalence between locally and globally pluripolar or complete pluripolar sets. However, different techniques were developed. 

In 1978, B. Josefson first showed the equivalence between local and global pluripolarity in Stein manifolds \cite{Josefson_1978}. Later E. Bedford and B. A. Taylor defined a new capacity with the help of complex \MA operators and gave an alternative proof \cite{BT_1976, BT_1982}. At the same time, they got the equivalence between pluripolarity and negligibility (the right column in the graph above). 
The similar problem for complete pluripolar sets was finally solved by M. Col\c{t}oiu in 1990, i.e., in Stein manifolds, a locally complete pluripolar set is also globally complete pluripolar \cite{Coltoiu_1989, Coltoiu_1990}. 

Inspired by the fact (B) and Col\c{t}oiu's result, we consider the following problem: 

\bigskip
\noindent{\bf Main Question.} \emph{Is it possible to patch up the potentials of complete \kahler metrics to obtain a global one?}
\bigskip

In other words, if a set is locally the complement of complete \kahler domains, is it globally the complement of some complete \kahler domain?  

\bigskip

For a precise setting, we start with the potentials instead of complete \kahler \\
metrics themselves. Otherwise, we need to extend the definition of the fundamental forms induced by these metrics so that we can solve the $\ddbar$-equations to obtain the potentials. The extension usually requires strong assumptions on the sets across which it is done. The known result is that the sets should be complete pluripolar. However, as the fact (B) mentioned above has shown, the existence of complete \kahler metrics outside complete pluripolar sets implies that the question has been solved in this case. So we choose a more general assumption and prove the following:   

\begin{theorem}\label{main_elckgck}
Assume $M$ is a Stein manifold and $A \subset M$ is a closed subset. 
If $M \setminus A$ locally admits complete \kahler metrics in the following sense: 
\begin{itemize}
\item $\{ U_i \}_{i \in \N}$ is a locally finite open covering of $M$; 
\item on each $U_i$, there exists $\varphi_i \in \psh{U_i} \cap \smooth(U_i \setminus A)$ such that $\ddbar \varphi_i$ gives a complete \kahler metric on $U_i \setminus A$ along $A \cap U_i$, 
\end{itemize}
then there exists a complete \kahler metric on $M \setminus A$ induced by a globally defined plurisubharmonic function on $M$. Moreover, this potential can be chosen to be bounded from below and smooth outside $A$. In particular, if every local potential is continuous, the global potential is also continuous. 
\end{theorem}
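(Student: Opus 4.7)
The plan is to construct the global potential by inductively patching the local $\varphi_i$ along a Stein exhaustion, using the regularized maximum as the gluing device and a global strictly plurisubharmonic exhaustion function to dominate transitional behavior. Since $M$ is Stein, fix a smooth strictly plurisubharmonic exhaustion $\rho \geq 0$ of $M$ and set $M_k := \{\rho < 2^k\}$, so $M = \bigcup_k M_k$ with $\overline{M_k} \subset\subset M_{k+1}$. Refining the given covering, I may assume $\{U_i\}$ is locally finite and admits a sub-cover $\{V_i\}$ with $V_i \subset\subset U_i$.

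By induction on $k$, I build $\Phi_k \in \psh{M_k} \cap \smooth(M_k \setminus A)$, bounded below, such that $\ddbar\Phi_k$ is a complete \kahler metric on $M_k \setminus A$ along $A \cap M_k$, and such that $\Phi_{k+1}|_{M_{k-1}} = \Phi_k|_{M_{k-1}}$. The compact shell $\overline{M_{k+1}} \setminus M_{k-1}$ meets only finitely many $V_i$'s, say $V_{i_1},\dots,V_{i_N}$. On each $U_{i_\alpha}$ form the modified local potential
\[
\psi_\alpha := \varphi_{i_\alpha} + A_\alpha \rho + c_\alpha,
\]
with constants $A_\alpha > 0$ and $c_\alpha \in \R$ to be chosen; since $A_\alpha \rho$ is smooth and strictly plurisubharmonic, $\ddbar\psi_\alpha \geq \ddbar\varphi_{i_\alpha}$ still induces a complete metric along $A \cap U_{i_\alpha}$. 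Define $\Phi_{k+1}$ on $M_{k+1}$ as the regularized maximum (with a small parameter $\eta < 1$) of $\Phi_k$ and those $\psi_\alpha$ wherever each summand is defined. The constants are to satisfy:
\begin{enumerate}[(i)]
\item $\psi_\alpha < \Phi_k - 1$ on $\overline{M_{k-1}} \cap U_{i_\alpha}$, so that $\Phi_{k+1} = \Phi_k$ on $M_{k-1}$;
\item on a neighborhood of $A \cap V_{i_\alpha}$ inside the shell, $\psi_\alpha$ strictly dominates every other competitor, so that $\Phi_{k+1}$ coincides with $\psi_\alpha$ there and inherits its completeness along $A$.
\end{enumerate}
Condition (i) is achieved by making $c_\alpha$ sufficiently negative, using continuity and boundedness below of $\Phi_k$ on the compact set $\overline{M_{k-1}}$. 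Condition (ii) is achieved by first fixing $A_\alpha$ large enough that $A_\alpha \rho$ overwhelms the uniform bounds of the competing potentials on a compact neighborhood of $A \cap V_{i_\alpha}$ in the shell.

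The main obstacle is the compatibility of (i) and (ii) at each step. Fortunately the two regions are disjoint within $U_{i_\alpha}$, and $\rho \leq 2^{k-1}$ on $M_{k-1}$, so the two requirements decouple: choose $A_\alpha$ large first to win dominance in the shell, then choose $c_\alpha$ very negative (this leaves the shell dominance intact since in the shell $\rho$ is larger, so changes in $c_\alpha$ are uniformly outweighed). Setting $\Phi := \lim_k \Phi_k$, the stability $\Phi_{k+1}|_{M_{k-1}} = \Phi_k|_{M_{k-1}}$ ensures the limit is well-defined and globally plurisubharmonic on $M$. It is smooth off $A$ because the regularized maximum of smooth functions is smooth, bounded from below by construction, and continuous whenever each $\varphi_i$ is (the regularized maximum of continuous functions being continuous). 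Finally, near any $p \in A$ the function $\Phi$ equals one of the $\psi_\alpha$ in a neighborhood, so $\ddbar\Phi = \ddbar\psi_\alpha$ there and the induced global metric is complete along $A$, establishing the theorem.
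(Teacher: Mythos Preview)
Your argument has a genuine gap when some local potential $\varphi_i$ is unbounded from below. Nothing in the hypotheses prevents $\varphi_i \to -\infty$ on $A$; indeed this is the typical situation, e.g.\ when $A$ is complete pluripolar and $\varphi_i = -\infty$ precisely on $A$. In that case $\psi_\alpha = \varphi_{i_\alpha} + A_\alpha \rho + c_\alpha$ also tends to $-\infty$ near $A$, and your condition~(ii) becomes impossible: $\psi_\alpha$ cannot strictly dominate the competitor $\Phi_k$, which by your own inductive hypothesis is bounded below, on any neighborhood of $A \cap V_{i_\alpha}$. Near $A$ the regularized maximum will therefore equal $\Phi_k$ (or some other bounded competitor) rather than $\psi_\alpha$, and you lose the completeness contribution of $\varphi_{i_\alpha}$ exactly where it is needed. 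On the new region $M_{k+1} \setminus M_k$, where only the $\psi_\alpha$'s are available, $\Phi_{k+1}$ would inherit their $-\infty$ behavior on $A$, contradicting the inductive claim that each $\Phi_k$ is bounded below. The paper isolates this difficulty and resolves it by a separate reduction carried out \emph{before} any patching: an unbounded potential $\varphi$ is replaced by the bounded $\tilde\varphi = e^{\varphi} + h(\varphi)$, where $h(t)$ behaves like $1/\log(-t)$ as $t \to -\infty$; a direct length estimate shows that $\ddbar \tilde\varphi$ still gives a complete metric along $A$ while $\tilde\varphi \ge 0$. Only after this reduction does the globalization argument (in the paper, via cut-offs and a rapidly growing convex function of the Stein exhaustion, not regularized maxima) go through.

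There is also a more technical problem with your decoupling of (i) and (ii). Adding the constant $c_\alpha$ shifts $\psi_\alpha$ by the same amount everywhere, independently of $\rho$; the fact that $\rho$ is larger in the shell does nothing to ``outweigh'' a very negative $c_\alpha$. Moreover, condition~(ii) as stated cannot hold for all $\alpha$ simultaneously: at a point of $A \cap V_{i_\alpha} \cap V_{i_\beta}$ you would need both $\psi_\alpha > \psi_\beta$ and $\psi_\beta > \psi_\alpha$. A workable version would require either a careful ordering of the $\psi_\alpha$'s or a uniform lower bound on $\ddbar$ of the regularized maximum in the transition zones, neither of which you have supplied.
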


The idea of the proof is as follows: we divide the problem into two cases depending on whether the potentials are bounded. If all potentials are bounded, we use cut-off functions to extend their domains of definition to the whole of $M$. However, some negativities may be brought in this process. In order to remove them, we need to compose the strictly plurisubharmonic exhaustion function of $M$ with a suitably chosen increasing convex function. Then we can find a global potential. If on some open subset there exists a potential unbounded from below, we modify it to be bounded and reduce this case to the first one.     

\bigskip
\noindent{\bf Acknowledgements.} The author would like to express his deepest thanks to the supervisor Professor Takeo Ohsawa for academic guidance and discussions.
\bigskip

\section{Proof}

First we consider the case that every potential is bounded for below.

\begin{proposition}\label{bddp}
Assume $M$ is a Stein manifold and $A \subset M$ is a closed subset. 
If $M \setminus A$ locally admits complete \kahler metrics induced by bounded plurisubharmonic functions, i.e.,  
\begin{itemize}
\item $\{ U_i \}_{i \in \N}$ is a locally finite open covering of $M$; 
\item on each $U_i$, there exists $\varphi_i \in \psh{U_i} \cap \smooth(U_i \setminus A)$ such that $\varphi_i \ge 0$ and $\ddbar \varphi_i$ gives a complete \kahler metric on $U_i \setminus A$ along $A \cap U_i$, 
\end{itemize}
then there exists a complete \kahler metric on $M \setminus A$ induced by a globally defined plurisubharmonic function on $M$. Moreover, this potential can be constructed to be bounded from below. 
In particular, if every $\varphi_i$ is continuous, a continuous global potential can be chosen so that it is smooth outside $A$. 
\end{proposition}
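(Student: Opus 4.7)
I will build $\varphi$ as $\chi(\psi)+\varphi_0$, with $\psi$ a smooth strictly plurisubharmonic exhaustion of the Stein manifold $M$, $\chi\colon\R\to\R$ a convex increasing function to be specified, and $\varphi_0$ a cut-off sum of the given local potentials. The composite $\chi\circ\psi$ will play two roles: absorbing the bounded error produced by the cut-offs, and making the final metric complete at infinity of $M$. The hard part is the construction of $\varphi_0$, since the naive sum $\sum_i\chi_i\varphi_i$ has remainder terms including the cross term $\partial\chi_i\wedge\dbar\varphi_i$ in which $\partial\varphi_i$ genuinely blows up near $A$ (completeness of $\ddbar\varphi_i$ forces this); this divergent contribution cannot be controlled by a single smooth global plurisubharmonic correction unless $|\partial\varphi_i|$ is first dominated by a positive current.

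First I refine the given cover to a locally finite $\{W_i\}$ with $\overline{W_i}\Subset U_i$ and fix a further shrinking $\{V_i\}$, $V_i\Subset W_i$, still covering $M$; each $\varphi_i$ is then bounded on $\overline{W_i}$, say $0\le\varphi_i\le M_i$. To tame $\partial\varphi_i$ I replace $\varphi_i$ by $\tilde\varphi_i:=e^{\varphi_i}\in[1,e^{M_i}]$, which is still plurisubharmonic and, since
\[
\ddbar\tilde\varphi_i \;=\; e^{\varphi_i}\bigl(\ddbar\varphi_i+\partial\varphi_i\wedge\dbar\varphi_i\bigr) \;\ge\; \ddbar\varphi_i,
\]
still induces a complete \kahler metric along $A\cap U_i$. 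The same identity yields the crucial pointwise domination $\partial\tilde\varphi_i\wedge\dbar\tilde\varphi_i\le\tilde\varphi_i\,\ddbar\tilde\varphi_i\le e^{M_i}\,\ddbar\tilde\varphi_i$ on $\overline{W_i}$, controlling the gradient squared by the positive current. I take cut-offs of the form $\chi_i=\rho_i^2$ with $\rho_i$ smooth, $\rho_i\equiv 1$ on $V_i$, $\supp\rho_i\subset W_i$; the identity $\partial\chi_i\wedge\dbar\chi_i=4\chi_i\,\partial\rho_i\wedge\dbar\rho_i$ lets a weighted Cauchy--Schwarz with the pointwise weight $t=\chi_i/(2e^{M_i})$ be free of any singularity on $\{\chi_i=0\}$.

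Setting $\varphi_0:=\sum_i\chi_i\tilde\varphi_i$ (a locally finite sum, nonnegative, smooth on $M\setminus A$), I expand $\ddbar(\chi_i\tilde\varphi_i)$ and apply the Cauchy--Schwarz just chosen together with the domination above; after a short calculation this yields a pointwise inequality of the shape
\[
\ddbar\varphi_0 \;\ge\; \tfrac{1}{2}\sum_i \chi_i\,\ddbar\tilde\varphi_i \;-\; B,
\]
where $B$ is a locally finite sum of smooth $(1,1)$-forms, each supported in some $\overline{W_i}$ and bounded there with respect to a fixed background \kahler metric; hence $B$ is locally bounded on $M$. Next I fix a smooth strictly plurisubharmonic exhaustion $\psi\ge 0$ of $M$ and pick a smooth convex increasing $\chi$ with $\chi'$ growing fast enough on each interval $[k,k+1]$ that $\ddbar(\chi\circ\psi)\ge 2B$ on $\{\psi\le k\}$ and such that the metric $\ddbar(\chi\circ\psi)$ is complete at infinity of $M$; both constraints only bound $\chi'$ from below on successive intervals and are jointly satisfiable.

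Then $\varphi:=\chi(\psi)+\varphi_0$ is plurisubharmonic with $\ddbar\varphi\ge\tfrac12\ddbar(\chi\circ\psi)$ and $\ddbar\varphi\ge\tfrac12\sum_i\chi_i\ddbar\tilde\varphi_i$; it is bounded below by $\chi(0)$, smooth on $M\setminus A$, and continuous on $M$ whenever all $\varphi_i$ are. On each $V_j$ the condition $\chi_j\equiv 1$ gives $\ddbar\varphi\ge\tfrac12\ddbar\tilde\varphi_j\ge\tfrac12\ddbar\varphi_j$, a complete \kahler metric along $A\cap V_j$; since $\{V_j\}$ covers $M$ and $\ddbar(\chi\circ\psi)/2$ is complete at infinity of $M$, $\ddbar\varphi$ is a complete \kahler metric on $M\setminus A$ along $A$. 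The main obstacle will be the absorption estimate for $\ddbar\varphi_0$ above, which is exactly where the purely local hypothesis near $A$ must be paid for; both the exponential substitution and the squared cut-off are essential, because without them the divergent remainder coming from $\partial\varphi_i$ near $A$ is out of reach of any single global $\chi\circ\psi$ correction.
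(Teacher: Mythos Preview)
Your proof is correct and follows essentially the same strategy as the paper: replace each $\varphi_i$ by a convex transform (you use $e^{\varphi_i}$, the paper uses $\varphi_i^2$ after an affine normalization to $1\le\varphi_i\le 2$) so that $\partial u_i\wedge\dbar u_i$ is dominated by $\ddbar u_i$, cut off, control the cross terms by a weighted Cauchy--Schwarz, and absorb the resulting locally bounded error with $\chi\circ\psi$ for a strictly plurisubharmonic exhaustion $\psi$. The only tactical difference is that you square the cut-off so the remainder $8e^{M_i}\,\partial\rho_i\wedge\dbar\rho_i$ is manifestly smooth and bounded, whereas the paper keeps a plain cut-off $\rho_i$ and argues that $\rho_i^{-1}\partial\rho_i\wedge\dbar\rho_i\to 0$ as $\rho_i\to 0$; your variant is a little cleaner and also explicitly arranges completeness of $\ddbar(\chi\circ\psi)$ at infinity of $M$, which the paper leaves implicit.
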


\begin{proof}
It is known that plurisubharmonic function is always locally bounded from above. Since $\varphi_i$ is bounded, a linear transformation $t \mapsto at+b$ with $a > 0$ can be used to modify each $\varphi_i$ such that $1 \le \varphi_i \le 2$ for all $i$. Note that such modifications keep the completeness of the metrics. 

Set $u_i := \varphi_i^2$. It follows that 
$$
\ddbar u_i = 2(\partial \varphi_i \wedge \dbar \varphi_i + \varphi_i \ddbar \varphi_i). 
$$
So we know that $u_i \in \psh{U_i}$ also induces a complete \kahler metric on $U_i \setminus A$ along $A \cap U_i$. Moreover, we have the following estimate: 
$$\ddbar u_i \ge 2 \partial \varphi_i \wedge \dbar \varphi_i \ge \frac{1}{8} \partial \varphi_i^2 \wedge \dbar \varphi_i^2 = \frac{1}{8} \partial u_i \wedge \dbar u_i.$$ 

Note that on $A$, $\ddbar u_i$ should be understood in the sense of current. Since $u_i$ is bounded, we can choose a decreasing sequence of smooth plurisubharmonic functions $v_j$ which tends to $u_i$ and define $\ddbar u_i = \lim \ddbar v_j$. $\ddbar \varphi_i$ is defined in the same way. Then $\partial \varphi_i \wedge \dbar \varphi_i$ and $\partial u_i \wedge \dbar u_i$ are also defined and the same estimates hold.  

Take $U'_i \Subset U_i$ such that  $\{U'_i \}$ still forms an open covering of $M$. Choose $\rho_i \in \smooth(M)$ such that $\rho_i \ge 0, \supp \rho_i \subset U_i, \rho_i \equiv 1$ on $U'_i$. 
Then $\rho_i u_i$ extends to a function defined on $M$ and is smooth outside $A$. If $\rho_i \ne 0$, consider $\ddbar \rho_i u_i$ on $U_i \setminus A$: 
$$
\ddbar \rho_i u_i = u_i \ddbar \rho_i + \partial \rho_i \wedge \dbar u_i + \partial u_i \wedge \dbar \rho_i + \rho_i \ddbar u_i. 
$$

Since 
\begin{eqnarray*}
0 &\le& (\frac{4}{\sqrt{\rho_i}}\partial \rho_i + \frac{\sqrt{\rho_i}}{4} \partial u_i) \wedge (\frac{4}{\sqrt{\rho_i}}\dbar \rho_i + \frac{\sqrt{\rho_i}}{4} \dbar u_i) \\
&=& \frac{16}{\rho_i} \partial \rho_i \wedge \dbar \rho_i + \partial \rho_i \wedge \dbar u_i + \partial u_i \wedge \dbar \rho_i + \frac{\rho_i}{16} \partial u_i \wedge \dbar u_i, 
\end{eqnarray*}
it follows that 
\begin{eqnarray*}
\partial \rho_i \wedge \dbar u_i + \partial u_i \wedge \dbar \rho_i &\ge& -\frac{16}{\rho_i} \partial \rho_i \wedge \dbar \rho_i - \frac{\rho_i}{16} \partial u_i \wedge \dbar u_i \\
&\ge& -\frac{16}{\rho_i} \partial \rho_i \wedge \dbar \rho_i - \frac{\rho_i}{2} \ddbar u_i. 
\end{eqnarray*}
Therefore, 
$$
\ddbar \rho_i u_i \ge u_i \ddbar \rho_i -\frac{16}{\rho_i} \partial \rho_i \wedge \dbar \rho_i + \frac{\rho_i}{2} \ddbar u_i.  
$$
Note that $$\frac{16}{\rho_i} \partial \rho_i \wedge \dbar \rho_i \to 0 \text{~as~} \rho_i \to 0.$$ 

Since $M$ is Stein, there exists a strictly plurisubharmonic exhaustion function $\psi$ on $M$. Let 
$$
\varphi:= r(\psi(z)) + \sum_i \rho_i u_i(z), 
$$
where $r: \R \rightarrow \R$ is an increasing convex function. Locally there are only finite terms in the sum, so $\varphi$ is well defined. 

It is known that for each $c \in \R$, $\{\psi < c\} \Subset X$. Therefore, a large enough coefficient $C_c$ can be chosen such that $C_c\ddbar\psi$ removes the negativity brought by $\sum_i u_i \ddbar \rho_i -\frac{16}{\rho_i} \partial \rho_i \wedge \dbar \rho_i$ in $\{\psi < c\}$. If $r$ is chosen to increase rapidly enough at $+\infty$, then $\varphi$ is plurisubharmonic on $M$ such that $\hessian \varphi \ge \frac{1}{2}\hessian \varphi_i$ on $U'_i \setminus A$. Therefore, $\varphi$ induces a complete \kahler metric on $M \setminus A$. 

If every $\varphi_i$ is continuous, $\varphi$ constructed as above is also continuous. By setting $\mu$ a small positive constant and $\gamma = \hessian \varphi$, Richberg's regularization can be applied to $\varphi$ on $M \setminus A$ to obtain a smooth strictly plurisubharmonic function $\widetilde{\varphi}$. The estimate $\hessian \widetilde{\varphi} \ge (1-\mu)\hessian \varphi$ implies that $\widetilde{\varphi}$ induces a complete \kahler metric on $M \setminus A$. 
\end{proof}

\bigskip

For the case not every $\varphi_i$ is bounded from below, we need the following lemma to reduce it into the previous case. 
\begin{lemma}\label{unbddp} 
Under the same hypothesis as above, 
if on some open $U \subset M$, $\varphi \in \psh{U}\cap \smooth(U \setminus A)$ is unbounded and $\ddbar \varphi$ gives a complete \kahler metric on $U \setminus A$ along $A \cap U$, then there exists another potential $\tilde{\varphi} \in \psh{U}\cap \smooth(U \setminus A)$ such that $\tilde{\varphi}$ is bounded from below and $\ddbar \tilde{\varphi}$ gives a complete \kahler metric on $U \setminus A$ along $A$.
\end{lemma}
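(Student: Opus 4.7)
The plan is to set $\tilde\varphi := \chi(\varphi)$ for a carefully chosen smooth convex increasing function $\chi\colon\R\to\R$, so that plurisubharmonicity on $U$, smoothness on $U\setminus A$ and boundedness from below of $\tilde\varphi$ are automatic (the last provided $\chi$ is itself bounded below). The non-trivial requirement is that
\[
\ddbar\chi(\varphi)=\chi''(\varphi)\,\partial\varphi\wedge\dbar\varphi+\chi'(\varphi)\,\ddbar\varphi
\]
still induce a complete K\"ahler metric on $U\setminus A$ along $A$. This is delicate because any convex increasing $\chi$ with $\chi(-\infty)>-\infty$ must satisfy $\chi'(-\infty)=0$, so the factor $\chi'(\varphi)$ in front of the original complete metric $\ddbar\varphi$ degenerates to $0$ along sequences where $\varphi\to-\infty$, i.e.\ potentially exactly where completeness is needed. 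The completeness lost from the second term has to be recovered from the first.

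Accordingly I will impose the two a priori competing conditions
\[
\int_{-\infty}^{0}\chi'(s)\,ds<\infty\qquad\text{and}\qquad \int_{-\infty}^{0}\sqrt{\chi''(s)}\,ds=\infty
\]
on $\chi$. They are compatible: take a smooth convex increasing extension of $\chi''(s):=\frac{1}{s^{2}(\log|s|)^{2}}$ for $s\ll 0$; then $\chi'(s)\sim\frac{1}{|s|(\log|s|)^{2}}$ is integrable at $-\infty$ (so $\chi$ is bounded below), while $\sqrt{\chi''(s)}\sim\frac{1}{|s|\log|s|}$ is not. With such $\chi$, the form $\ddbar\tilde\varphi$ is positive definite on $U\setminus A$ (already the term $\chi'(\varphi)\ddbar\varphi$ is), smooth there, and $\tilde\varphi$ is globally bounded below.

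To check completeness along $A$, take any curve $\gamma\colon[0,1)\to U\setminus A$ with $\gamma(t)\to A$ as $t\to 1$ and set $f:=\varphi\circ\gamma$. If $f$ is bounded below on $[0,1)$, say $f\geq-K$, then $\chi'(f)\geq\chi'(-K)>0$ uniformly along $\gamma$, whence $\ddbar\tilde\varphi\geq\chi'(-K)\,\ddbar\varphi$ and the length of $\gamma$ is infinite by the given completeness of $\ddbar\varphi$. Otherwise $f$ is unbounded below, and continuity together with the intermediate value theorem produces, for each large $N$, some $t_{N}\in[0,1)$ with $f(t_{N})=-N$; the length of $\gamma|_{[0,t_{N}]}$ in the piece $\chi''(\varphi)\,\partial\varphi\wedge\dbar\varphi$ pulls back to $\int_{0}^{t_{N}}\sqrt{\chi''(f(t))}\,|f'(t)|\,dt$, which is at least the distance from $f(0)$ to $-N$ in the one-dimensional Riemannian metric $\chi''(s)\,ds^{2}$ on $\R$, namely $\int_{-N}^{f(0)}\sqrt{\chi''(s)}\,ds$, and this diverges as $N\to\infty$ by construction. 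Either way the length of $\gamma$ in $\ddbar\tilde\varphi$ is infinite, so the new metric is complete along $A$. The only genuine obstacle is the balance encoded in the two integral conditions on $\chi$ — the naive choices $\chi(t)=e^{t}$ or a regularised $\max(t,-M)$ fail one of them — and the logarithmic refinement of $\chi''$ is what threads the needle.
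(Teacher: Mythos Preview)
Your proof is correct and follows essentially the same route as the paper: both compose $\varphi$ with a convex increasing function whose second derivative behaves like $\frac{1}{t^{2}(\log|t|)^{2}}$ near $-\infty$ --- the paper writes this explicitly as $\tilde\varphi=e^{\varphi}+h(\varphi)$ with $h(t)=\frac{1}{\log(-t)}$ for $t\ll 0$ --- and both verify completeness by the same two-case split on whether $\varphi\circ\gamma$ is bounded below. Your packaging via the pair of integral conditions on a single $\chi$ is a mild streamlining of the same mechanism (the paper's summand $e^{\varphi}$ is in fact redundant once one notes, as you do, that $\chi'(\varphi)\,\ddbar\varphi$ already handles the bounded-below case).
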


To verify completeness, the following Hopf-Rinow theorem is useful as a criterion. 

\begin{theorem}[Hopf-Rinow] 
Assume $(M,g)$ is a connected Riemannian manifold. The following are equivalent: 
\begin{enumerate}[(A)]
\item Any closed and bounded subset of $M$ is compact.
\item $M$ is complete as a metric space. 
\item $M$ is geodesically complete, i.e., for any $p$ in $M$, the exponential map $\exp_p$ is defined on the entire tangent space $T_p M$. 
\end{enumerate}
\end{theorem}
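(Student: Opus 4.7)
The plan is to prove the cyclic chain $(A) \Rightarrow (B) \Rightarrow (C) \Rightarrow (A)$, following the standard organization of the Hopf--Rinow theorem.

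$(A) \Rightarrow (B)$ is immediate: a Cauchy sequence $\{x_n\} \subset M$ is bounded, so its closure is closed and bounded, hence compact by (A). Every Cauchy sequence in a compact metric space converges.

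For $(B) \Rightarrow (C)$, I would argue by contradiction. Suppose $\gamma: [0,T) \to M$ is a unit-speed geodesic that cannot be extended to $[0, T + \varepsilon)$ for any $\varepsilon > 0$. For any $t_n \nearrow T$, the inequality $d(\gamma(t_n), \gamma(t_m)) \le |t_n - t_m|$ shows $\{\gamma(t_n)\}$ is Cauchy, so by (B) it converges to some $q \in M$. Choose a uniformly normal neighborhood $W$ of $q$ on which the exponential map is defined on a common ball of uniform radius $\delta > 0$; once $t_n$ is close enough to $T$ so that $\gamma(t_n) \in W$, the local existence theorem for the geodesic ODE with initial data $(\gamma(t_n), \dot\gamma(t_n))$ extends $\gamma$ beyond $T$, a contradiction.

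The substantive implication is $(C) \Rightarrow (A)$. Fix $p \in M$; by (C), $\exp_p$ is defined on all of $T_p M$. The heart of the proof is a \emph{minimizing-geodesic lemma}: for every $q \in M$ there exists a unit vector $v \in T_p M$ such that $\exp_p(r v) = q$, where $r := d(p,q)$. To establish it, choose $\delta \in (0, r)$ small enough that $\overline{B_\delta(p)}$ is a compact normal ball; on the compact sphere $\partial B_\delta(p)$, the continuous function $x \mapsto d(x,q)$ attains its minimum at some $x_0 = \exp_p(\delta v)$. One then shows that
$$I := \{\, t \in [\delta, r] : d(\exp_p(tv), q) = r - t \,\}$$
is nonempty, closed, and open in $[\delta, r]$, hence equals $[\delta, r]$, so in particular $\exp_p(rv) = q$. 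Granted the lemma, for any closed and bounded $K \subset M$, setting $R := \sup_{q \in K} d(p, q) < \infty$ yields $K \subseteq \exp_p(\overline{B}_R(0))$; the right-hand side is the continuous image of a closed Euclidean ball, hence compact, so $K$ is compact as a closed subset.

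The main obstacle is the openness of $I$ in the minimizing-geodesic lemma. At an interior $t_0 \in I$ one repeats the sphere-minimization argument around $\exp_p(t_0 v)$ to produce a new unit direction $w$ there, and must then combine the triangle inequality with the identity $d(p, \exp_p(t_0 v)) = t_0$ to conclude that the concatenation of the two geodesic pieces at $\exp_p(t_0 v)$ has no corner. This in turn rests on the local fact that distance-realizing curves inside a normal neighborhood are smooth radial geodesics, which forbids broken minimizers. Everything else reduces to standard metric-space arguments and ODE theory for geodesics.
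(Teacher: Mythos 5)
The paper offers no proof of this statement: it is quoted as the classical Hopf--Rinow theorem (the bibliography cites Jost's \emph{Riemannian Geometry and Geometric Analysis}), and what is actually used afterwards is only the remark that condition (A) is equivalent to every non relatively compact curve having infinite length. Your cyclic scheme $(A)\Rightarrow(B)\Rightarrow(C)\Rightarrow(A)$ --- the Cauchy-sequence argument, the uniformly normal neighborhood giving a uniform existence time to extend a maximal geodesic, and the minimizing-geodesic lemma established by showing $I=\{t\in[\delta,r]: d(\exp_p(tv),q)=r-t\}$ is nonempty, closed and open --- is the standard textbook proof and is correct in outline; the only points deserving the same care as the openness step are the nonemptiness of $I$ (every path from $p$ to $q$ must cross $\partial B_\delta(p)$, so $d(x_0,q)=r-\delta$) and the fact that finiteness of $R=\sup_{q\in K}d(p,q)$ in $(C)\Rightarrow(A)$ uses the connectedness hypothesis.
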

Condition (A) above is topologically equivalent to that any non relatively compact differential curve $\gamma$, i.e., $\gamma$ cannot be contained in any compact subset of $M$, has $\infty$ length with respect to $g$.

\begin{proof}
Consider $$\Phi_1 := e^{\varphi}, \Phi_2 := h(\varphi)$$ 
where $h(t) := \frac{1}{\log(-t)}\chi(t+3) + K\alpha(t)$, $\chi(t) \in \smooth(\R, [0,1])$ with $\chi \equiv 1$ on $(-\infty,0]$ and $\chi \equiv 0$ on $[1,\infty)$, $\alpha(t) \in \smooth(\R,[0,\infty))$ with $\alpha \equiv 0$ on $(-\infty,-4]$ and $\alpha''(t) > 0$ on $(-4,\infty)$, $K > 0$ is chosen large enough such that $h(t)$ is increasing and convex. It is clear that $\Phi_1, \Phi_2$ are plurisubharmonic on $U$ and nonnegative. 

Choose any differential curve $\gamma: [0,1) \to U \setminus A$ which is non relatively compact with respect to $M \setminus A$. 
If $\varphi\circ\gamma([0,1)) > C$ for some constant $C$, then the computation 
$$
\ddbar \Phi_1 = e^{\varphi}(\partial \varphi \dbar \varphi + \ddbar \varphi) \ge e^C \ddbar \varphi
$$
implies that the length of $\gamma$ with respect to $\ddbar \Phi_1 \ge e^{C/2} \cdot$ the length of $\gamma$ with respect to $\ddbar \varphi$. Since $\ddbar \varphi$ is a complete \kahler on $U \setminus A$ along $A$, which means the latter is $+\infty$, it follows that the former is also $+\infty$. 

If $\varphi\circ\gamma([0,1))$ is unbounded from below, 
when $\varphi\circ\gamma(t) < -1$, the following computation
\begin{eqnarray*}
& &\ddbar (\frac{1}{\log(-\varphi)})\\
 &=& \frac{2}{\log^3(-\varphi)}\frac{1}{\varphi^2}\partial \varphi \wedge \dbar \varphi + \frac{1}{\log^2(-\varphi)}\frac{1}{\varphi^2}\partial \varphi \wedge \dbar \varphi -\frac{1}{\log^2(-\varphi)}\frac{1}{\varphi}\ddbar \varphi\\
&\ge& \frac{1}{\log^2(-\varphi)}\frac{1}{\varphi^2}\partial \varphi \wedge \dbar \varphi
\end{eqnarray*}
implies that the length of $\gamma$ with respect to $\ddbar \Phi_2$, denoted by $l(\gamma)$, with some $t_0 \in (0,1)$ which is chosen such that $\varphi \circ \gamma(t_0) < -3$, has the estimate
\begin{eqnarray*}
l(\gamma) &\ge& \int_{t_0}^1 \sqrt{\sum \frac{1}{\log^2(-\varphi)}\frac{1}{\varphi^2}\frac{\partial \varphi}{\partial z_i}\frac{\ud z_i}{\ud t}\overline{\frac{\partial \varphi}{\partial z_j}\frac{\ud z_j}{\ud t}}} \ud t\\
&=& \int_{t_0}^1 \frac{1}{| \varphi\log(-\varphi) |} \huge| \frac{\ud \varphi\circ\gamma(t)}{\ud t}\huge| \ud t\\
&\ge& \liminf_{t \to 1} \int_{t_0}^t \frac{1}{-\varphi\log(-\varphi)} \ud -\varphi\circ\gamma(t)\\
&=& +\infty.
\end{eqnarray*}


Therefore, $\tilde{\varphi} := \Phi_1 + \Phi_2 \ge 0$ serves as the potential of a complete \kahler metric on $U \setminus A$ along $A$. 
\end{proof}

The main theorem follows immediately from Proposition \ref{bddp} and Lemma \ref{unbddp}. 

\section{Further questions}

Complete pluripolar sets serve as important examples of complements of complete \kahler domains. We want to consider what kind of set satisfies this condition and give more examples. 

And at the same time, we plan to consider what kind of set contains complements of complete \kahler domains as a subclass and whether or not the similar local and global equivalence holds there.

$$$$
Xu LIU\\ 
Graduate School of Mathematics, Nagoya University, Nagoya 464-8602, Japan\\
E-mail: x10001c@math.nagoya-u.ac.jp

\end{document}